\newtheorem{theorem}{Theorem}[section]
\newtheorem{remark}[theorem]{Remark}
\newtheorem{thmx}{Theorem}[section]
\theoremstyle{theorem}
\newtheorem{example}[theorem]{Example}
\newtheorem{lemma}[theorem]{Lemma}
\theoremstyle{definition}
\numberwithin{equation}{section} \makeatletter
\numberwithin{equation}{section} \makeatletter
\title[Annulus for the zeros of a polynomial]{On annulus containing all the zeros of a polynomial}
\author{\footnotesize{N. A. Rather and Suhail Gulzar}}
 \address{Department of Mathematics \\
    University of kashmir \\
   Srinagar, Hazratbal 190006
   \\ India}
\subjclass[2010]{primary: 30C10, 30C15.}
\keywords{Polynomials; Location of zeros of polynomials.}
\begin{document}
\date{}
\maketitle
\begin{center}
    \footnotesize{ Department of Mathematics, University of Kashmir  Hazratbal\\
     Srinagar 190006, India}\\
     emails: \texttt{dr.narather@gmail.com},     \texttt{sgmattoo@gmail.com},
\end{center}
\begin{abstract}
In this paper, we obtain an annulus containing all the zeros of the polynomial involving binomial coefficients and generalized Fibonacci numbers.  Our result generalize some of the recently obtained results in this direction.
\end{abstract}
\section{\textbf{Introduction and Statements}}
Gauss and Cauchy were the earliest contributors in the theory of the location of zeros of a polynomial, since then this subject has been studied by many people (for example, see \cite{marden,mmr}). There is always a need for better and better results in this subject because of its application in many areas, including signal processing, communication theory and control theory.\\
A classical result due to Cauchy (see \cite[p. 122]{marden}) on the distribution of zeros of a polynomial may be stated as follows:

\begin{thmx}\label{t1}
If $P(z)=z^n+a_{n-1}z^{n-1}+a_{n-2}z^{n-2}+\cdots+a_0$ is a polynomial with complex coefficients, then all zeros of $P(z)$ lie in the disk $|z|\leq r$ where $r$ is the unique positive root of the real-coefficient polynomial
$$Q(x)=x^n-|a_{n-1}|x^{n-1}-|a_{n-2}|x^{n-2}-\cdots-|a_1|x-|a_0|.  $$
\end{thmx}

Recently D\'{i}az-Barrero \cite{db} improved this estimate by identifying an annulus containing all the zeros of a polynomial, where the inner and outer radii are expressed in terms of binomial coefficients and Fibonacci numbers. In fact he has proved the following result.

\begin{thmx}\label{t2}
Let $P(z)=\sum_{j=0}^{n}a_jz^j$ be a non-constant complex polynomial. Then all its zeros lie in the annulus $C=\{z\in\mathbb{C}:r_1\leq |z|\leq r_2 \}$ where
$$r_1=\frac{3}{2}\underset{1\leq k\leq n}{\min}\left\{\dfrac{2^nF_k \binom{n}{k}}{F_{4n}}\left|\frac{a_0}{a_k}\right|\right\}^{\frac{1}{k}},\,\,\, r_2=\frac{2}{3}\underset{1\leq k\leq n}{\max}\left\{\dfrac{F_{4n}}{2^nF_k \binom{n}{k}}\left|\frac{a_{n-k}}{a_n}\right|\right\}^{\frac{1}{k}}. $$
Here $F_j$ are Fibonacci's numbers, that is, $F_0=0,$ $F_1=1$ and for $j\geq 2,$ $F_j=F_{j-1}+F_{j-2}.$ 
\end{thmx}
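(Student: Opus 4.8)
The plan is to reduce the whole statement to one algebraic identity for binomial--Fibonacci sums, combined with the elementary ``partition of unity'' argument for bounding the modulus of a zero. The crucial observation is that the seemingly strange constants $\frac{3}{2}$ and $\frac{2}{3}$ are chosen precisely so that the weights
\[
c_k:=\frac{2^nF_k\binom{n}{k}}{F_{4n}}\left(\frac{3}{2}\right)^k,\qquad k=1,\dots,n,
\]
form a partition of unity, i.e. $\sum_{k=1}^{n}c_k=1$. I would establish this first. Writing $\phi=\frac{1+\sqrt5}{2}$, $\psi=\frac{1-\sqrt5}{2}$ and using Binet's formula $F_k=\frac{\phi^k-\psi^k}{\sqrt5}$, the binomial theorem gives
\[
\sum_{k=0}^{n}\binom{n}{k}F_k\left(\frac{3}{2}\right)^k=\frac{1}{\sqrt5}\left[\Bigl(1+\tfrac{3}{2}\phi\Bigr)^n-\Bigl(1+\tfrac{3}{2}\psi\Bigr)^n\right].
\]
The point is the (easily verified) relations $1+\frac{3}{2}\phi=\frac{\phi^4}{2}$ and $1+\frac{3}{2}\psi=\frac{\psi^4}{2}$, which follow from $\phi^2=\phi+1$; substituting them collapses the right-hand side to $\frac{1}{2^n}\cdot\frac{\phi^{4n}-\psi^{4n}}{\sqrt5}=\frac{F_{4n}}{2^n}$, so that $\sum_{k=1}^{n}c_k=\frac{2^n}{F_{4n}}\cdot\frac{F_{4n}}{2^n}=1$. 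I expect this identity to be the main obstacle: everything else is routine, but recognizing that $3/2$ is exactly the scaling making the sum telescope to $F_{4n}/2^n$ is the heart of the matter.

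For the outer radius, set $R:=\max_{1\le k\le n}\bigl(\frac{F_{4n}}{2^nF_k\binom{n}{k}}|a_{n-k}/a_n|\bigr)^{1/k}$, so that $r_2=\frac{2}{3}R$ and, by the definition of the maximum, $|a_{n-k}|\le \frac{2^nF_k\binom{n}{k}}{F_{4n}}R^k|a_n|$ for every $k$. Since $R=\frac{3}{2}r_2$, this reads $|a_{n-k}|\le c_k\,r_2^{\,k}|a_n|$. Now if $z$ is any zero of $P$, then from $a_nz^n=-\sum_{k=1}^{n}a_{n-k}z^{n-k}$ and the triangle inequality,
\[
|a_n|\,|z|^n\le\sum_{k=1}^{n}|a_{n-k}|\,|z|^{n-k}\le |a_n|\,|z|^n\sum_{k=1}^{n}c_k\Bigl(\tfrac{r_2}{|z|}\Bigr)^k.
\]
Were $|z|>r_2$, each factor $(r_2/|z|)^k<1$ would force the right-hand sum to be strictly less than $\sum_k c_k=1$, contradicting the inequality. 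Therefore every zero satisfies $|z|\le r_2$.

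For the inner radius I would apply the outer bound to the reversed polynomial $Q(z)=z^nP(1/z)=\sum_{k=0}^{n}a_{n-k}z^{k}$, whose nonzero zeros are the reciprocals of those of $P$. Its leading coefficient is $a_0$ and its coefficient of $z^{n-k}$ is $a_k$, so the outer estimate just proved yields $|w|\le\frac{2}{3}\max_{1\le k\le n}\bigl(\frac{F_{4n}}{2^nF_k\binom{n}{k}}|a_k/a_0|\bigr)^{1/k}$ for every zero $w$ of $Q$. Taking reciprocals and using $1/\max(\cdots)^{1/k}=\min(1/\cdots)^{1/k}$ turns this into exactly $|z|\ge r_1$ for every nonzero zero $z$ of $P$; the degenerate case $a_0=0$ makes $r_1=0$ and is trivial. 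Combining the two bounds places all zeros of $P$ in the annulus $r_1\le|z|\le r_2$. Throughout I read any term with a vanishing coefficient in a denominator as $+\infty$ (so it is ignored by the $\min$) and in a numerator as $0$ (ignored by the $\max$), which is the standard convention making the statement meaningful for all non-constant $P$.
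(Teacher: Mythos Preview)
Your argument is correct and follows essentially the same route as the paper. The paper establishes the more general identity of Lemma~\ref{l1} (via the generalized Binet formula), uses it in the same Cauchy-type estimate $|P(z)|\ge |a_n||z|^n\bigl(1-\sum_k|a_{n-k}/a_n|\,|z|^{-k}\bigr)$ to get the outer radius, and then applies the outer bound to $z^nP(1/z)$ for the inner radius; Theorem~\ref{t2} is then recovered as the special case $a=b=c=u=v=w=1$, where the lemma reduces exactly to your partition-of-unity identity $\sum_{k=1}^n\binom{n}{k}F_k(3/2)^k=F_{4n}/2^n$.
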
 

More recently, M. Bidkham et. al. \cite{bs} considered $t$-Fibonacci numbers, namely $F_{t,n}=tF_{t,n-1}+F_{t,n-2}$ for $n\geq 2$ with initial condition $F_{t,0}=0,\,F_{t,1}=1$ where $t$ is any positive real number and obtained the following generalization of Theorem \ref{t2}.

\begin{thmx}\label{t4}
Let $P(z)=\sum_{j=0}^{n}a_jz^j$ be a non-constant complex polynomial of degree $n$ and 
$$\lambda_k=\dfrac{(t^3+2t)^k(t^2+1)^n F_{t,k}\binom{n}{k}}{(t^2+1)^k F_{t,4n}}  $$
for any real positive number $t.$ Then all the zeros of $P(z)$ lie in the annulus $R=\{z\in\mathbb{C}:s_1\leq |z|\leq s_2 \}$ where 
$$s_1=\underset{1\leq k\leq n}{\min}\left\{\lambda_k\left|\dfrac{a_0}{a_k}\right|\right\}^{\frac{1}{k}},\quad s_2=\underset{1\leq k\leq n}{\max}\left\{\dfrac{1}{\lambda_k}\left|\dfrac{a_{n-k}}{a_n}\right|\right\}^{\frac{1}{k}}.   $$
\end{thmx}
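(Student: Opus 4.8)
The plan is to reduce the entire statement to the single \emph{partition-of-unity} identity
$$\sum_{k=1}^{n}\lambda_k = 1,$$
after which both radius estimates follow from the classical Cauchy-type majorization argument. The first preparatory step is to record the Binet representation of the $t$-Fibonacci numbers: writing $\alpha,\beta$ for the roots of $x^2-tx-1=0$, so that $\alpha+\beta=t$, $\alpha\beta=-1$ and $\alpha-\beta=\sqrt{t^2+4}$, one has $F_{t,m}=(\alpha^m-\beta^m)/(\alpha-\beta)$ for every index $m$. Rewriting the coefficients as $\lambda_k=\binom{n}{k}F_{t,k}(t^3+2t)^k(t^2+1)^{n-k}/F_{t,4n}$ (cancelling the factors of $t^2+1$), the target identity is equivalent to $\sum_{k=0}^{n}\binom{n}{k}F_{t,k}(t^3+2t)^k(t^2+1)^{n-k}=F_{t,4n}$, where the $k=0$ term has been adjoined for free since $F_{t,0}=0$.

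This combinatorial identity is the heart of the proof and the step I expect to be the main obstacle. Substituting the Binet formula and applying the binomial theorem twice converts the left-hand sum into
$$\frac{1}{\alpha-\beta}\Big[\big(t^2+1+\alpha(t^3+2t)\big)^n-\big(t^2+1+\beta(t^3+2t)\big)^n\Big].$$
The decisive algebraic fact, which I would establish by iterating the defining relation $\alpha^2=t\alpha+1$ to get $\alpha^3=(t^2+1)\alpha+t$ and then $\alpha^4=(t^3+2t)\alpha+(t^2+1)$ (and identically for $\beta$), is precisely that $t^2+1+\alpha(t^3+2t)=\alpha^4$ and $t^2+1+\beta(t^3+2t)=\beta^4$. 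Hence the bracket collapses to $\alpha^{4n}-\beta^{4n}$ and the whole expression to $(\alpha^{4n}-\beta^{4n})/(\alpha-\beta)=F_{t,4n}$, yielding $\sum_{k=1}^{n}\lambda_k=1$.

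With the identity in hand the two bounds are routine and dual to one another. For the outer radius, let $z$ be any zero of $P$; then $a_nz^n=-\sum_{j=0}^{n-1}a_jz^j$, so reindexing $j=n-k$ gives $|a_n||z|^n\le\sum_{k=1}^{n}|a_{n-k}||z|^{n-k}$. Assuming $|z|>s_2$ forces $|z|^k>\lambda_k^{-1}|a_{n-k}/a_n|$, hence $|a_{n-k}|<\lambda_k|a_n||z|^k$ for every $k$; summing and invoking $\sum_k\lambda_k=1$ produces $|a_n||z|^n<|a_n||z|^n$, a contradiction, so $|z|\le s_2$. For the inner radius, from $a_0=-\sum_{j=1}^{n}a_jz^j$ one gets $|a_0|\le\sum_{k=1}^{n}|a_k||z|^k$, and assuming $|z|<s_1$ forces $|a_k||z|^k<\lambda_k|a_0|$ for each $k$, whence $|a_0|<|a_0|\sum_k\lambda_k=|a_0|$, again impossible; thus $|z|\ge s_1$. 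Combining the two inequalities places every zero in the annulus $s_1\le|z|\le s_2$, as claimed.
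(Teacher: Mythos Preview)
Your proof is correct and follows essentially the same route as the paper: the core step is the partition-of-unity identity $\sum_{k=1}^n\lambda_k=1$, which the paper establishes (in its more general Lemma 2.1) by the same Binet-formula-plus-binomial-theorem computation you outline, and the zero-location argument is the standard Cauchy majorization. The only cosmetic difference is that the paper obtains the inner radius by applying the outer-radius bound to the reciprocal polynomial $Q(z)=z^nP(1/z)$, whereas you run the symmetric direct estimate from $a_0=-\sum_{k\ge1}a_kz^k$; the two arguments are equivalent.
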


In this paper, we determine in the complex plane an annulus containing all the zeros of a polynomial involving binomial coefficients and generalized Fibonacci numbers (see \cite{oy}) defined recursively by
\begin{align}\label{fn}\nonumber
F_0^{(a,b,c)}&=0,\,\,\,F_1^{(a,b,c)}=1,\\
F_n^{(a,b,c)}&=
\begin{cases}
a\,F_{n-1}^{(a,b,c)}+c\,F_{n-2}^{(a,b,c)}\quad\textnormal{if n is even,}\\
b\,F_{n-1}^{(a,b,c)}+c\,F_{n-2}^{(a,b,c)}\quad\textnormal{if n is odd,}
\end{cases}
(n\geq 2)
\end{align}
where $a,b,c$ are any three positive real numbers. Our result include Theorems \ref{t2}, \ref{t4} as special cases. More precisely, we prove the following result.

\begin{theorem}\label{th1}
Let $P(z)=\sum_{j=0}^{n}a_jz^j$ be a non-constant complex polynomial of degree $n.$ Then all its zeros lie in the annulus $C=\{z\in\mathbb{C}:r_1\leq |z|\leq r_2 \}$ where
$$r_1=\dfrac{uv+2w}{uvw+w^2}\,\underset{1\leq k\leq n}{\min}\left\{\dfrac{(uvw+w^2)^n u^{\xi(k)}(uv)^{\lfloor \frac{k}{2}\rfloor} F_k^{(u,v,w)}\binom{n}{k}}{F_{4n}^{(u,v,w)}}\left|\dfrac{a_0}{a_k}\right|\right\}^{\frac{1}{k}},$$  $$ r_2=\dfrac{abc+c^2}{ab+2c}\,\underset{1\leq k\leq n}{\max}\left\{\dfrac{F_{4n}^{(a,b,c)}}{(abc+c^2)^n a^{\xi(k)}(ab)^{\lfloor \frac{k}{2}\rfloor} F_k^{(a,b,c)}\binom{n}{k}}\left|\dfrac{a_{n-k}}{a_n}\right|\right\}^{\frac{1}{k}},   $$
 $a,b,c,u,v,w$ are any positive real numbers, $\xi(k):=k-2\lfloor \frac{k}{2}\rfloor$ and $F_m^{(a,b,c)}$ is defined as in \eqref{fn}.
\end{theorem}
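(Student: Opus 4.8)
The plan is to reduce both radius bounds to a single combinatorial identity for the generalized Fibonacci numbers and then prove that identity by linearizing the bi-periodic recurrence. I first treat the outer radius $r_2$. Writing $\rho=\frac{abc+c^2}{ab+2c}$ and $\tilde F_k=a^{\xi(k)}(ab)^{\lfloor k/2\rfloor}F_k^{(a,b,c)}$, the proposed $r_2$ is exactly $\rho\max_k(\mu_k^{-1}|a_{n-k}/a_n|)^{1/k}$, where $\mu_k=\frac{\tilde F_k\binom{n}{k}(ab+2c)^k(abc+c^2)^{n-k}}{F_{4n}^{(a,b,c)}}$. If I can show $\sum_{k=0}^n\mu_k=1$, the bound follows by the standard argument used for Theorems \ref{t2} and \ref{t4}: at any zero $z_0$ with $R=|z_0|$ one has $|a_n|R^n\le\sum_{k=1}^n|a_{n-k}|R^{n-k}$, while the defining inequality $r_2\ge(\mu_k^{-1}|a_{n-k}/a_n|)^{1/k}$ gives $|a_{n-k}|\le\mu_k|a_n|r_2^k$; assuming $R>r_2$ and summing would yield $|a_n|R^n\le\sum_{k=1}^n\mu_k|a_n|R^n<|a_n|R^n$, a contradiction, so $R\le r_2$.

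Everything therefore hinges on the identity
\[
\sum_{k=0}^n a^{\xi(k)}(ab)^{\lfloor k/2\rfloor}F_k^{(a,b,c)}\binom{n}{k}(ab+2c)^k(abc+c^2)^{n-k}=F_{4n}^{(a,b,c)},
\]
which I regard as the crux of the whole argument. The first step toward it is to notice that the awkward $k$-dependent prefactor $a^{\xi(k)}(ab)^{\lfloor k/2\rfloor}$ is precisely what converts the bi-periodic recurrence \eqref{fn} into a constant-coefficient one: a short check on the parity of $k$ shows that $\tilde F_k$ satisfies $\tilde F_k=ab\,\tilde F_{k-1}+abc\,\tilde F_{k-2}$ with $\tilde F_0=0,\ \tilde F_1=a$. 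Consequently $\tilde F_k=\frac{a(t_+^k-t_-^k)}{t_+-t_-}$, where $t_\pm=\frac{ab\pm\Delta}{2}$ and $\Delta=\sqrt{ab(ab+4c)}$ are the roots of $t^2-ab\,t-abc=0$.

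With this Binet form the sum collapses under the binomial theorem: substituting $\tilde F_k$ and separating the $t_+$ and $t_-$ parts gives
\[
\frac{a}{\Delta}\Big[(t_+(ab+2c)+abc+c^2)^n-(t_-(ab+2c)+abc+c^2)^n\Big].
\]
The decisive algebraic fact is that, writing $x_\pm=t_\pm+c$ (the roots of $x^2-(ab+2c)x+c^2=0$), one has $t_\pm(ab+2c)+abc+c^2=x_\pm^2$, which is immediate from $x_\pm^2=(ab+2c)x_\pm-c^2$ together with $x_\pm-t_\pm=c$. Hence the bracket equals $x_+^{2n}-x_-^{2n}$, and since the even-indexed subsequence satisfies $F_{2m+2}=(ab+2c)F_{2m}-c^2F_{2m-2}$ with the same roots $x_\pm$, one obtains $F_{4n}^{(a,b,c)}=\frac{a}{\Delta}(x_+^{2n}-x_-^{2n})$, proving the identity.

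Finally, the inner radius $r_1$ follows by applying the outer-radius bound to the reversed polynomial $Q(z)=z^nP(1/z)=\sum_{j=0}^n a_{n-j}z^j$, whose nonzero zeros are the reciprocals of the zeros of $P$, now with the independent parameters $(u,v,w)$ in place of $(a,b,c)$; the prefactor $\frac{uv+2w}{uvw+w^2}$ is exactly the reciprocal of the scaling constant appearing in $r_2$, and reciprocating a maximum over $k$ produces the minimum displayed in $r_1$. I expect the genuine obstacle to be discovering the correct normalization $\tilde F_k$ and the pair of characteristic equations whose roots differ by the shift $c$; once the relation $x_\pm=t_\pm+c$ is isolated, the remainder is routine. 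Specializing $(a,b,c)=(t,t,1)$ (so that the prefactor reduces to $t^k$) recovers Theorem \ref{t4}, and $(a,b,c)=(1,1,1)$ recovers Theorem \ref{t2}, as claimed.
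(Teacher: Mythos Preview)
Your proposal is correct and follows essentially the same route as the paper: establish the combinatorial identity (the paper's Lemma~\ref{l1}) via the Binet closed form plus the binomial theorem, then apply the standard Cauchy-type estimate for $r_2$ and the reciprocal-polynomial reduction for $r_1$; your organization through the normalized sequence $\tilde F_k$ and the shift $x_\pm=t_\pm+c$ is in fact a slightly cleaner packaging of the same computation the paper carries out with symmetric sums in $\alpha,\beta$. One small bookkeeping slip: once the prefactor $\rho=\frac{abc+c^2}{ab+2c}$ is absorbed into the $k$-th root you obtain $r_2=\max_{1\le k\le n}(\mu_k^{-1}|a_{n-k}/a_n|)^{1/k}$ with no residual $\rho$ in front, which is exactly the inequality $r_2\ge(\mu_k^{-1}|a_{n-k}/a_n|)^{1/k}$ that you then (correctly) use.
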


\begin{remark}
\textnormal{By taking $a,b,c$ and $u,v,w$ suitably, we shall obtain Theorems \ref{t2}, \ref{t4}. For example, if we take $a=b=u=v=t$ and $c=w=1,$ we obtain Theorem \ref{t4}. }
\end{remark} 

\begin{example}
\textnormal{
We consider the polynomial $P(z)=z^3+0.1z^2+0.3z+0.7,$ which is the only example considered by D\'{i}az-Barrero \cite{db} and by using Theorem \ref{t2}, the annulus containing all the zeros of $P(z)$ comes out to be $0.58< |z|< 1.23$. We improved the upper bound of this annulus by taking $a=1/2,$ $b=1$ and $c=3/8$ in Theorem \ref{th1} and obtained the disk, $ |z|< 1.185, $ which contains all the zeros of polynomial $P(z).$  We can similarly improve the lower bound by choosing $u,$ $v,$ $w$ suitably.}
\end{example}
\section{\textbf{Lemma}}
To prove the above theorem, we need the following lemma.
\begin{lemma}\label{l1}
If $F_k^{(a,b,c)}$ is defined as in \eqref{fn}, then 
\begin{align}\label{le1}
\sum\limits_{k=1}^{n}(ab+c)^{n-k}(ab+2c)^k a^{\xi(k)}(ab)^{\lfloor \frac{k}{2}\rfloor}c^{n-k} F_k^{(a,b,c)}\binom{n}{k}=F_{4n}^{(a,b,c)}
\end{align}
where $\xi(k)=k-2\lfloor \frac{k}{2}\rfloor.$
\end{lemma}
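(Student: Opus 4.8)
The plan is to linearize the entire left-hand side by passing to a Binet-type representation. First I would reduce the period-two recurrence \eqref{fn} to a constant-coefficient one by combining two consecutive steps: the even-indexed subsequence $r_m := F_{2m}^{(a,b,c)}$ satisfies $r_m = (ab+2c)\,r_{m-1} - c^2\,r_{m-2}$ with $r_0 = 0$, $r_1 = a$, so that $F_{2m}^{(a,b,c)} = a\,\frac{\alpha^m-\beta^m}{\alpha-\beta}$, where $\alpha,\beta$ are the roots of $x^2-(ab+2c)x+c^2$; in particular $F_{4n}^{(a,b,c)} = a\,\frac{\alpha^{2n}-\beta^{2n}}{\alpha-\beta}$. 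Writing $S := ab+2c$ and noting $(ab+c)^{n-k}c^{n-k} = D^{n-k}$ with $D := c(ab+c) = abc+c^2$, the left-hand side of \eqref{le1} becomes $\sum_{k=0}^{n}\binom{n}{k}D^{n-k}g_k$, where I set $g_k := S^k a^{\xi(k)}(ab)^{\lfloor k/2\rfloor}F_k^{(a,b,c)}$ (the $k=0$ term is harmless since $F_0^{(a,b,c)}=0$).

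The crux is the claim that, despite its parity-dependent weights, $g_k$ obeys a clean constant-coefficient recurrence
\[
g_k = abS\,g_{k-1} + abcS^2\,g_{k-2},\qquad g_0=0,\quad g_1=aS.
\]
I would prove this by splitting into $k$ even and $k$ odd and substituting the matching branch of \eqref{fn}: in each case the prefactors $a^{\xi(k)}(ab)^{\lfloor k/2\rfloor}$ telescope so that the two terms on the right reassemble exactly $aF_{k-1}^{(a,b,c)}+cF_{k-2}^{(a,b,c)}$ (when $k$ is even) or $bF_{k-1}^{(a,b,c)}+cF_{k-2}^{(a,b,c)}$ (when $k$ is odd), which is precisely $F_k^{(a,b,c)}$ times the surviving weight. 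Consequently $g_k = \frac{a}{\alpha-\beta}(\mu^k-\nu^k)$, where $\mu,\nu$ are the roots of $x^2 - abS\,x - abcS^2 = 0$; the normalization is fixed because $\mu-\nu = S(\alpha-\beta)$ gives $g_1 = aS$.

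Finally I would collapse the sum by the binomial theorem, $\sum_{k=0}^{n}\binom{n}{k}D^{n-k}g_k = \frac{a}{\alpha-\beta}\bigl[(D+\mu)^n-(D+\nu)^n\bigr]$, and identify the two bases. Using $\mu+\nu = abS$ and $\mu\nu = -abcS^2$ one checks $(D+\mu)+(D+\nu) = 2D+abS = \alpha^2+\beta^2$ and, after substituting $D=c(ab+c)$, $S=ab+2c$, the short factorization $(D+\mu)(D+\nu) = D^2+abDS-abcS^2 = c^4 = \alpha^2\beta^2$. Hence $\{D+\mu,\,D+\nu\} = \{\alpha^2,\beta^2\}$, and since $\mu-\nu = S(\alpha-\beta)>0$ the correspondence is $D+\mu=\alpha^2$, $D+\nu=\beta^2$, giving $\frac{a}{\alpha-\beta}(\alpha^{2n}-\beta^{2n}) = F_{4n}^{(a,b,c)}$, as required. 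The main obstacle is genuinely the middle step: recognizing that the awkward parity weights conspire into a single constant-coefficient recurrence for $g_k$, since that is what turns an otherwise intractable weighted binomial sum into an instance of the binomial theorem; the sum/product matching at the end is then only a one-line algebraic identity.
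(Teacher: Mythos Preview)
Your argument is correct. Both proofs rest on the same two pillars---a Binet-type closed form and the binomial theorem---but the organization differs. The paper quotes the closed form
\[
F_k^{(a,b,c)}=\dfrac{a^{1-\xi(k)}}{(ab)^{\lfloor k/2\rfloor}}\cdot\dfrac{\alpha_0^{k}-\beta_0^{k}}{\alpha_0-\beta_0},\qquad \alpha_0,\beta_0\ \text{roots of}\ x^2-abx-abc,
\]
so that the parity weights $a^{\xi(k)}(ab)^{\lfloor k/2\rfloor}$ cancel against the prefactor in one stroke, leaving $a(\alpha_0^{k}-\beta_0^{k})/(\alpha_0-\beta_0)$; it then rewrites the remaining factors as symmetric functions of $\alpha_0,\beta_0$ and telescopes the binomial sum directly to $a(\alpha_0^{4n}-\beta_0^{4n})/(\alpha_0-\beta_0)$. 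You instead work with the even-index characteristic roots ($x^2-(ab+2c)x+c^2$), and your key step is the observation that the full weighted term $g_k=S^k a^{\xi(k)}(ab)^{\lfloor k/2\rfloor}F_k^{(a,b,c)}$ satisfies a constant-coefficient recurrence; this is exactly the same cancellation the paper gets for free from the cited Binet formula, rediscovered by hand. What your route buys is self-containment---you never invoke the formula from \cite{oy}---and a very clean endgame (matching $D+\mu,D+\nu$ with $\alpha^2,\beta^2$ via sum and product); what the paper's route buys is brevity, since once the parity factors vanish the computation is a single binomial expansion in $\alpha_0,\beta_0$.
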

\begin{proof}
For $F_k^{(a,b,c)},$ we have \cite{oy}
$$F_k^{(a,b,c)}=\dfrac{a^{1-\xi(k)}}{(ab)^{\lfloor\frac{k}{2}\rfloor}}\left(\dfrac{\alpha^{k}-\beta^{k}}{\alpha-\beta}\right)  $$
where $\alpha=\frac{ab+\sqrt{(ab)^2+4abc}}{2},$  $\beta=\frac{ab-\sqrt{(ab)^2+4abc}}{2}$ and $\xi(k)=k-2\lfloor \frac{k}{2}\rfloor.$\\
Consider,
\begin{align*}
\sum\limits_{k=1}^{n}&\binom{n}{k}(abc)^{n-k}\big[(ab)^2+abc\big]^{n-k}\big[(ab)^3+2(ab)^2c\big]^ka^{\xi(k)}(ab)^{\lfloor\frac{k}{2}\rfloor} F_k^{(a,b,c)}\\=&\sum\limits_{k=1}^{n}\binom{n}{k}(-1)^{n-k}(\alpha\beta)^{n-k}\Bigg(\sum\limits_{j=0}^{2}\alpha^j\beta^{2-j}\Bigg)^{n-k}\Bigg(\sum\limits_{j=0}^{3}\alpha^j\beta^{3-j}\Bigg)^ka\left(\dfrac{\alpha^{k}-\beta^{k}}{\alpha-\beta}\right)\\=&\dfrac{a\alpha^n}{\alpha-\beta}\Bigg\{\sum\limits_{k=1}^{n}\binom{n}{k}(-1)^{n-k}\Bigg(\sum\limits_{j=0}^{2}\alpha^j\beta^{3-j}\Bigg)^{n-k}\Bigg(\sum\limits_{j=0}^{3}\alpha^j\beta^{3-j}\Bigg)^k\Bigg\}\\&-\dfrac{a\beta^n}{\alpha-\beta}\Bigg\{\sum\limits_{k=1}^{n}\binom{n}{k}(-1)^{n-k}\Bigg(\sum\limits_{j=0}^{2}\alpha^{1+j}\beta^{2-j}\Bigg)^{n-k}\Bigg(\sum\limits_{j=0}^{3}\alpha^j\beta^{3-j}\Bigg)^k\Bigg\}\\=&\dfrac{a\alpha^n}{\alpha-\beta}\Bigg\{\sum\limits_{j=0}^{3}\alpha^j\beta^{3-j}-\sum\limits_{j=0}^{2}\alpha^j\beta^{3-j}\Bigg\}^n-\dfrac{a\beta^n}{\alpha-\beta}\Bigg\{\sum\limits_{j=0}^{3}\alpha^j\beta^{3-j}-\sum\limits_{j=0}^{2}\alpha^{1+j}\beta^{2-j}\Bigg\}^n\\=&a\left(\dfrac{\alpha^n(\alpha^3)^n-\beta^n(\beta^3)^n}{\alpha-\beta}\right)=(ab)^{2n}F_{4n}^{(a,b,c)}.
\end{align*}
Equivalently, we have
\begin{align*}
\sum\limits_{k=1}^{n}\binom{n}{k}(ab+c)^{n-k}(ab+2c)^ka^{\xi(k)}(ab)^{\lfloor\frac{k}{2}\rfloor}c^{n-k} F_k^{(a,b,c)}=F_{4n}^{(a,b,c)}.
\end{align*}
\end{proof}
\section{\textbf{Proof of Theorem}}
\begin{proof}[\textnormal{\textbf{Proof of Theorem \ref{th1}}}]
We first show that all the zeros of $P(z)$ lie in 
\begin{align}\label{p1}
|z|\leq r_2=\underset{1\leq k\leq n}{\max}\left\{\dfrac{(ab+c)^kc^kF_{4n}^{(a,b,c)}}{(ab+c)^n(ab+2c)^k a^{\xi(k)}(ab)^{\lfloor \frac{k}{2}\rfloor}c^n F_k^{(a,b,c)}\binom{n}{k}}\left|\frac{a_{n-k}}{a_n}\right|\right\}^{\frac{1}{k}}
\end{align}
 where $a,b,c$ are any three positive real numbers. From \eqref{p1}, it follows that
\begin{align*}
\left|\frac{a_{n-k}}{a_n}\right|\leq r_{2}^{k}\dfrac{(ab+c)^n(ab+2c)^k a^{\xi(k)}(ab)^{\lfloor \frac{k}{2}\rfloor}c^n F_k^{(a,b,c)}\binom{n}{k}}{(ab+c)^kc^kF_{4n}^{(a,b,c)}},\quad k=1,2,3,\cdots,n
\end{align*}
or 
\begin{align}\label{p2}
\sum\limits_{k=1}^{n}\left|\frac{a_{n-k}}{a_n}\right|\dfrac{1}{r_{2}^{k}}\leq \sum\limits_{k=1}^{n} \dfrac{(ab+c)^n(ab+2c)^k a^{\xi(k)}(ab)^{\lfloor \frac{k}{2}\rfloor}c^n F_k^{(a,b,c)}\binom{n}{k}}{(ab+c)^kc^kF_{4n}^{(a,b,c)}}.
\end{align}
Now, for $|z|>r_2,$ we have
\begin{align*}
|P(z)|=&|a_nz^n+a_{n-1}z^{n-1}+\cdots+a_1z+a_0|\\\geq &|a_n| |z|^n\left\{1-\sum\limits_{k=1}^{n}\left|\frac{a_{n-k}}{a_n}\right|\dfrac{1}{|z|^{k}}\right\}\\>&|a_n| |z|^n\left\{1-\sum\limits_{k=1}^{n}\left|\frac{a_{n-k}}{a_n}\right|\dfrac{1}{r_{2}^{k}}\right\}.
\end{align*}
Using \eqref{le1} and \eqref{p2}, we have for $|z|>r_2,$ $|P(z)|>0.$ Consequently all the zeros of $P(z)$ lie in $|z|\leq r_2$ and this proves the second part of theorem.\\
To prove the first part of the theorem, we will use second part. If $a_0=0,$ then $r_1=0$ and there is nothing to prove. Let $a_0\neq 0,$ consider the polynomial
$$Q(z)=z^nP(1/z)=a_0+a_1z^{n-1}+\cdots+a_{n-1}z+a_n.$$
By second part of the theorem for any three positive real numbers $u,v,w$, if $Q(z)=0,$ then 
\begin{align*}
|z|\leq&\underset{1\leq k\leq n}{\max}\left\{\dfrac{(uv+w)^kw^kF_{4n}^{(u,v,w)}}{(uv+w)^n(uv+2w)^k u^{\xi(k)}(uv)^{\lfloor \frac{k}{2}\rfloor}w^n F_k^{(au,v,w)}\binom{n}{k}}\left|\frac{a_{k}}{a_0}\right|\right\}^{1/k}\\=&\dfrac{1}{\underset{1\leq k\leq n}{\min}\left\{\dfrac{(uv+w)^kw^kF_{4n}^{(u,v,w)}}{(uv+w)^n(uv+2w)^k a^{\xi(k)}(ab)^{\lfloor \frac{k}{2}\rfloor}w^n F_k^{(u,v,w)}\binom{n}{k}}\left|\frac{a_0}{a_{k}}\right|\right\}^{1/k}}\\=&\frac{1}{r_1}.
\end{align*}
Now replacing $z$ by $1/z$ and observing that all the zeros of $P(z)$ lie in 
$$|z|\geq r_1=\underset{1\leq k\leq n}{\min}\left\{\dfrac{(uv+w)^kw^kF_{4n}^{(u,v,w)}}{(uv+w)^n(uv+2w)^k u^{\xi(k)}(uv)^{\lfloor \frac{k}{2}\rfloor}w^n F_k^{(u,v,w)}\binom{n}{k}}\left|\frac{a_0}{a_{k}}\right|\right\}^\frac{1}{k}.$$
This completes the proof of theorem \ref{th1}.
\end{proof}
\noindent\textbf{Acknowledgement} \\
 The second author is supported by Council of Scientific and Industrial Research, New Delhi, under grant F.No. 09/251(0047)/2012-EMR-I.

\end{document}